\documentclass[letterpaper, 10 pt, conference]{ieeeconf}

\IEEEoverridecommandlockouts
\overrideIEEEmargins

\usepackage{amsmath} 
\usepackage{amssymb}  
\usepackage[utf8]{inputenc}
\usepackage{color}
\usepackage{tikz}
\usetikzlibrary{shapes,arrows.meta}
\usepackage{xspace}
\usepackage{siunitx}
\usepackage{cite}
\usepackage{bm,fixmath}


\usepackage{graphicx}

\usepackage{algorithm}
\usepackage{algpseudocode}

\algrenewcommand\algorithmicrequire{\textbf{Input:}}

\pdfobjcompresslevel=0

\DeclareMathOperator*{\argmin}{arg\,min}
\DeclareMathOperator{\prox}{prox}
\DeclareMathOperator{\refl}{refl}

\newtheorem{definition}{Definition}
\newtheorem{remark}{Remark}

\newtheorem{theorem}{Theorem}
\newtheorem{assumption}{Assumption}
\newtheorem{lemma}{Lemma}

\newcommand{\norm}[1]{\left\lVert#1\right\rVert}
\newcommand{\x}{\mathbold{x}}
\newcommand{\y}{\mathbold{y}}
\newcommand{\w}{\mathbold{w}}
\newcommand{\z}{\mathbold{z}}
\newcommand{\p}{\mathbold{p}}
\newcommand{\R}{\mathbb{R}}
\newcommand{\A}{\mathbold{A}}
\newcommand{\bv}{\mathbold{b}}

\def\endstatement{\hfill$\square$}

\title{\LARGE \bf
Prediction-Correction Splittings \\ for Nonsmooth Time-Varying Optimization
\vskip-3mm}
\author{Nicola Bastianello, Andrea Simonetto, Ruggero Carli
\thanks{N. Bastianello and R. Carli are with the Department of Information Engineering, University of Padova, Italy.\newline {\tt\scriptsize nicola.bastianello.3@phd.unipd.it, carlirug@dei.unipd.it}. \newline A. Simonetto is with IBM Research Ireland. Dublin, Ireland. \newline{\tt\scriptsize andrea.simonetto@ibm.com}.}
}

\begin{document}

\maketitle
\thispagestyle{empty}
\pagestyle{empty}

\begin{abstract}
We address the solution of time-varying optimization problems characterized by the sum of a time-varying strongly convex function and a time-invariant nonsmooth convex function.
We design an online algorithmic framework based on prediction-correction, which employs splitting methods to solve the sampled instances of the time-varying problem.
We describe the prediction-correction scheme and two splitting methods, the forward-backward and the Douglas-Rachford. Then by using a result for generalized equations, we prove convergence of the generated sequence of approximate optimizers to a neighborhood of the optimal solution trajectory. Simulation results for a leader following formation in robotics assess the performance of the proposed algorithm.
\end{abstract}

\begin{keywords}
time-varying optimization, prediction-correction, splitting methods, forward-backward, Douglas-Rachford, generalized equations
\end{keywords}

\section{Introduction}\label{sec:intro}
We are interested in the solution of time-varying optimization problems in the form
\begin{equation}\label{eq:tv-problem}
	\x^*(t) = \argmin_{\x\in\R^n} \{ f(\x;t) + g(\x) \}
\end{equation}
where $f:\R^n \times \R_+ \to \R$ is smooth and strongly convex, and $g:\R^n \to \R \cup \{+\infty\}$ is proper, closed and convex, but possibly non-differentiable. Since the solution $\x^*(t)$ -- the trajectory -- changes over time, our objective is to ``track'' such trajectory up to an error bound.

Problems conforming to the framework of~\eqref{eq:tv-problem} arise in many applications. The powerful and widely used \textit{model predictive control} (MPC) requires that we solve an optimization problem that varies with time \cite{jerez2014embedded,hours2016parametric,paternain2018} in order to compute the control action. In signal processing, the estimation of time-varying signals on the basis of observations gathered online can be cast as the problem of solving a series of varying problems \cite{asif2010dynamic,asif2014sparse,charles2016dynamic}. In robotics, path tracking and leader following problems can be cast in the framework of \eqref{eq:tv-problem}, see for example \cite{verscheure2009time,ardeshiri2011convex,dixit2018online}. Other application domains are economics~\cite{dontchev2013euler}, smart grids~\cite{Zhou2018}, and non-linear optimization~\cite{Zavala2010, Dinh2012}. 

In this paper, we are interested in solving problem~\eqref{eq:tv-problem} in a discrete-time framework, which is amenable to direct implementation with digital hardware. In particular, we discretize the problem with a sampling period $T_{\mathrm{s}}:=t_{k+1}-t_k$, which results in the sequence of time-invariant problems
\begin{equation}\label{eq:ti-problem}
	\x^*(t_k) = \argmin_{\x\in\R^n} \{ f(\x;t_k) + g(\x) \}, \quad k \in \mathbb{N}.
\end{equation}
Due to the time-varying nature of the problem, the sequence of problems is not known ahead of time, and each problem $k$ is revealed when sampled at $t_k$. 

The smaller the sampling time, the higher the accuracy of the solution computed will be. However, the time-invariant problems may be too difficult to solve inside a sampling period, hence there exists a natural trade-off between accuracy and practical implementation constraints.

We are interested in the solution of~\eqref{eq:ti-problem} using a \emph{prediction-correction} scheme. Prediction-correction algorithms have been studied both in the discrete-time framework that we employ \cite{simonetto2016class,simonetto2017prediction,simonetto2018tac} and in a continuous-time setup \cite{fazlyab2017prediction,Rahili2015,fazlyab2016self}.

In the cited works, however, the proposed algorithms can handle only smooth optimization problems. Our goal in this paper is to extend the proposed approach to the more general nonsmooth \eqref{eq:ti-problem}. In particular, in \cite{simonetto2017prediction} $g(\x)$ could be interpreted as the indicator function of a convex set, while in this work we are interested in accepting general, nonsmooth convex functions.

While the prediction-correction scheme will be described in detail in the following section, we mention here that two optimization problems will be needed to be solved at each time $t_k$. The solution of these problems will be obtained by using \emph{splitting methods}, and in particular the forward-backward and the Douglas-Rachford splitting. For background and review of these methods we refer the interested reader to \cite{ryu2016primer,bauschke2017convex}, and for some examples of application to convex optimization problems \cite{combettes2005signal,combettes2011proximal}.

\paragraph*{Organization} The paper is structured as follows. Section~\ref{sec:pred-corr} introduces the proposed prediction-correction scheme with splitting methods. Section~\ref{sec:convergence} states the convergence results for the proposed algorithm, and reports a theoretical finding instrumental in proving them. Section~\ref{sec:simulation} describes the numerical simulations carried out to evaluate the performance of the algorithm. Section~\ref{sec:conclusions} concludes the paper. Proofs can be found in the Appendix.

\paragraph*{Basic definitions} We say that a function $\varphi:\R^n \to \R$ is $m$-\textit{strongly convex} for a constant $m \in \R_+$ iff $\varphi(\x) - \frac{m}{2}\|\x\|^2$ is convex. The function $\varphi$ is said to be $L$-\textit{smooth} if its gradient is $L$-Lipschitz continuous, or equivalently $\varphi(\x) - \frac{L}{2}\|\x\|^2$ is concave. We denote the class of $m$-strongly convex and $L$-smooth functions with $\mathcal{S}_{m,L}(\R^n)$.

A function $\varphi:\R^n \to \R \cup \{+\infty\}$ is said to be \textit{closed} if for any $a \in \R$ the set $\{ \x \in \operatorname{dom}(f)\ |\ \varphi(\x) \leq a \}$ is closed. A function is said to be \textit{proper} if it does not attain $-\infty$. We denote the class of closed, convex, and proper functions with $\Gamma_0(\R^n)$.

Given $\varphi \in \Gamma_0(\R^n)$ we define its \textit{subdifferential} as the set-valued operator $\partial \varphi: \R^n \rightrightarrows \R^n$ such that
$$
	\x \mapsto \left\{ \z \in \R^n\ |\ \forall \y \in \R^n:\ \langle \y-\x, \z \rangle + \varphi(\x) \leq \varphi(\y) \right\}.
$$

\section{Prediction-Correction and Splittings}\label{sec:pred-corr}
We describe the prediction-correction scheme that will be applied to solve \eqref{eq:ti-problem}, and we introduce the splitting methods that will be employed in combination with it. We will use the following assumptions.

\begin{assumption}\label{as:first}
The function $f:\R^n \times \R_+ \to \R$ belongs to $\mathcal{S}_{m,L}(\R^n)$ uniformly in time.  The function $g:\R^n \to \R \cup \{+\infty\}$ belongs to $\Gamma_0(\R^n)$ and it is in general nonsmooth. 
\end{assumption}

\begin{assumption}\label{as:first-bis}
The function $f$ has bounded time derivative of its gradient derivative as: 
$\norm{\nabla_{t\x} f(\x;t)} \leq C_0$.
\end{assumption}

\begin{assumption}\label{as:second}
The function $f$ is at least three times differentiable and has bounded derivatives w.r.t. $\x \in \R^n$ and $t \in \R_+$ as: 
\begin{align*}
	\norm{\nabla_{\x\x\x} f(\x;t)} & \leq C_1, \quad \norm{\nabla_{\x t \x} f(\x;t)} \leq C_2, \\
	& \norm{\nabla_{tt \x} f(\x;t)} \leq C_3.
\end{align*}
\end{assumption}

\smallskip

Assumption~\ref{as:first} ensures by strong convexity that the solution to the problem is unique at each time, and that the gradient of $f$ is Lipschitz continuous. These conditions are common for time-varying scenarios, \textit{e.g.}, see \cite{simonetto2016class,dontchev2013euler}. Assumption~\ref{as:first-bis} further guarantees that the temporal variability of the gradient of $f$ is bounded, and thus that the problem does not vary too widely between time instants. Finally, Assumption~\ref{as:second} -- which is not strictly necessary to prove convergence of our algorithm -- imposes boundedness of the tensor $\nabla_{\x\x\x}f(\x;t)$, which is a typical assumption for second-order algorithms. Moreover, it bounds the variability of the Hessian of $f$ over time, which guarantees the possibility of performing more accurate predictions of the optimal trajectory.

\subsection{Prediction-correction}
The scheme is characterized by two phases, the \emph{prediction} step, which at time $t_k$ computes an approximate solution to the problem at time $t_{k+1}$; and the \emph{correction} step, during which the predicted solution is refined using the information that becomes available at time $t_{k+1}$.

The problem that we are interested to approximately solve during the prediction step is $\x^*(t_{k+1}) = \argmin_\x \{ f(\x;t_{k+1}) + g(\x) \}$, which is equivalently characterized by the generalized equation
\begin{equation}\label{eq:gen-equation}
	\nabla_{\x} f(\x^*(t_{k+1});t_{k+1}) + \partial g(\x^*(t_{k+1})) \ni 0.
\end{equation}
However, at time $t_k$ the function $f(\x;t_{k+1})$ is not known and therefore we need to approximate it using the information available, \textit{i.e.}, $f(\x;t_k)$ and the current state $\x(t_k) =: \x_k$. This setup models for instance problems in which the cost function depends on observations that are gathered online.

In order to compute the prediction -- that we will denote $\tilde{\x}_{k+1|k}$ -- we approximate \textit{the gradient} of $f(\x;t_{k+1})$ by using a Taylor expansion around $(\x_k; t_k)$, which is given by
\begin{align}
\begin{split}
	\nabla h_k(\x) &= \nabla_{\x} f(\x_k;t_k) + \\ & + \nabla_{\x\x} f(\x_k;t_k) (\x - \x_k) + T_{\mathrm{s}} \nabla_{t\x} f(\x_k;t_k)
\end{split}
\end{align}
where the subscript indicates that $h$ is computed with information available at time $t_k$.

\begin{remark}
By Assumption~\ref{as:first}, it follows that $h_k$ belongs to $\mathcal{S}_{m,L}(\R^n)$, since $\nabla^2 h_k(\x) = \nabla_{\x\x} f(\x_k;t_k)$.
\end{remark}

An approximate solution to \eqref{eq:gen-equation} can now be computed solving the new generalized equation
\begin{equation}\label{eq:approx-gen-equation}
	\nabla h_k({\x}_{k+1|k}) + \partial g({\x}_{k+1|k}) \ni 0.
\end{equation}
Thus the prediction step requires the solution of this approximated generalized equation with initial condition $\x_k$.

At time $t_{k+1}$ the new information $f(\x;t_{k+1})$ is made available and therefore the correction step can be computed. The correction step approximately solves~\eqref{eq:gen-equation} by using the prediction as initial condition.

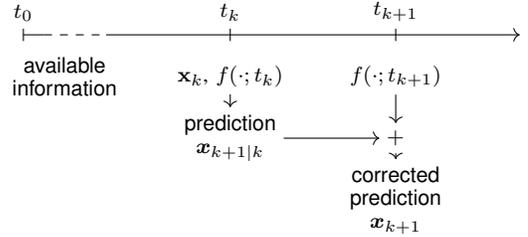
\begin{figure}[!ht]
\centering
\footnotesize
	\begin{tikzpicture}[scale=0.55]
	
		\draw (-3,0) -- (-2.5,0);
		\draw[dashed] (-2.5,0) -- (-1,0);
		\draw[->] (-1,0) -- (9,0);
		\draw (-3,-0.15) -- (-3,0.15) node[above] {$t_0$};
		\draw (2,-0.15) -- (2,0.15) node[above] {$t_{k}$};
		\draw (6,-0.15) -- (6,0.15) node[above] {$t_{k+1}$};
		
		\node[align=center] at (-2,-1) {\textsf{available}\\\textsf{information}};
		\node (tk) at (2,-1) {$\mathbf{x}_k$, $f(\cdot;t_k)$};
		\node (tk1) at (6,-1) {$f(\cdot;t_{k+1})$};
		
		\node[align=center] (pr) at (2,-2.5) {\textsf{prediction}\\$\x_{k+1|k}$};
		\node[align=center] (co) at (6,-4) {\textsf{corrected}\\\textsf{prediction}\\$\x_{k+1}$};
		\node (plus) at (6,-2.5) {$+$};
		
		\path[->] (tk) edge (pr)
			  (pr) edge (plus)
			  (tk1) edge (plus)
			  (plus) edge (co);
			
	\end{tikzpicture}

\caption{The prediction-correction scheme.}
\label{fig:prediction-correction-scheme}
\end{figure}

\begin{remark}
In order to compute the approximation $\nabla h_k(\x)$ of the gradient of $f(\x;t_{k+1})$ we assumed to know the derivative $\nabla_{t\x} f(\x;t_k)$. However in practice this is not always possible, and thus we can think of approximating the derivative using a \textit{first-order backward finite difference}
$$
	\nabla_{t\x} f(\x_k;t_k) \simeq  \left( \nabla_{\x} f(\x_k;t_k) - \nabla_{\x} f(\x_k;t_{k-1}) \right)/{T_\mathrm{s}},
$$
which introduces an error bounded by $O(T_\mathrm{s}^2)$ \cite{simonetto2016class}.
\end{remark}

\subsection{Splitting methods}
We have defined two problems that need solving, represented by the generalized equations \eqref{eq:gen-equation} and \eqref{eq:approx-gen-equation}. In order to do so, we can apply splitting methods, that are well suited to minimizing the sum of two (possibly non-differentiable) convex functions. In the following we introduce some necessary background on operator theory and the two most widely used splitting methods, the forward-backward and the Douglas-Rachford. For a treatment of operator theory and its uses for convex optimization we refer the reader to~\cite{ryu2016primer,bauschke2017convex}.

\begin{definition}
A mapping $\mathcal{T}: \R^n \to \R^n$ is said to be \textit{nonexpansive} if it has unitary Lipschitz constant, that is if $\norm{\mathcal{T}\x - \mathcal{T}\y} \leq \norm{\x - \y}$ for any two $\x, \y \in \R^n$.
\end{definition}

\begin{definition}
Let $f:\R^n \to \R \cup \{+\infty\}$ be a closed, proper and convex function, and let $\rho > 0$. The corresponding \textit{proximal operator} is defined as
$$
	\prox_{\rho f}(\x) = \argmin_{\y\in\R^n} \left\{ f(\y) + \frac{1}{2\rho} \norm{\y - \x}^2 \right\},
$$
and the \textit{reflective operator} is $\refl_{\rho f}(\x) = 2\prox_{\rho f}(\x) - \x$.
\end{definition}

\begin{definition}
Let $\mathcal{T}:\R^n \to \R^n$, the \textit{fixed points} of this operator are all the points $\bar{\x} \in \R^n$ such that $\bar{\x} = \mathcal{T}\bar{\x}$.
\end{definition}

Operator theory can be employed to solve convex optimization problems by designing suitable operators whose fixed points are the solutions of the original problem. Consider in particular the problem $\min_\x \{ \varphi(\x) + \gamma(\x) \}$, where $\varphi:\R^n \to \R$ belongs to $\mathcal{S}_{m,L}(\R^n)$, while $\gamma:\R^n \to \R \cup \{+\infty\}$ belongs to $\Gamma_0(\R^n)$. This is equivalent to asking that we solve the generalized equation $\nabla \varphi(\x) + \partial \gamma(\x) \ni 0$\footnote{Where recall that $\gamma(\x)$ is not necessarily smooth.}. Notice that both \eqref{eq:gen-equation} and \eqref{eq:approx-gen-equation} conform to this class of problems.

Since the problem of interest has a separable structure, we introduce now two examples of \textit{splitting operators} that exploit this fact. The first is the \textit{forward-backward} (FB) operator, defined as
\begin{equation}
	\mathcal{T}_{\mathrm{FB}} = \prox_{\rho \gamma} \circ (I - \rho \nabla \varphi)
\end{equation}
where $\rho < 2/L$, and the second is the \textit{Douglas-Rachford} (DR)
\begin{equation}
	\mathcal{T}_{\mathrm{DR}} = \frac{1}{2} \left( I + \refl_{\rho \gamma} \circ \refl_{\rho \varphi} \right)
\end{equation}
where it must be $\rho > 0$.


In order to derive the fixed points of the operators above we can employ the \textit{Banach-Picard} iteration, which consists of the repeated application of the chosen operator \cite{bauschke2017convex}. The resulting algorithms are called \textit{splitting methods}, and in particular the \textit{forward-backward spitting} (FBS) is $\x(k+1) = \mathcal{T}_{\mathrm{FB}}\x(k)$, $k \in \mathbb{N}$, or equivalently
\begin{align}\label{eq:fbs}
\begin{split}
	& \y(k) = \x(k) - \rho \nabla \varphi(\x(k)) \\
	& \x(k+1) = \prox_{\rho \gamma}(\y(k)).
\end{split}
\end{align}
The fixed points of $\mathcal{T}_{FB}$ computed via FBS coincide with the solutions to the original problem $\min_\x \{ \varphi(\x) + \gamma(\x) \}$.

The \textit{Douglas-Rachford splitting} (DRS) is instead applied to the auxiliary variable $\z$ and not -- as is the FBS -- directly to $\x$. The DRS is thus described by the update $\z(k+1) = \mathcal{T}_{\mathrm{DR}}\z(k)$, $k \in \mathbb{N}$, or equivalently
\begin{align}\label{eq:drs}
\begin{split}
	& \x(k) = \prox_{\rho \varphi}(\z(k)) \\
	& \y(k) = \prox_{\rho \gamma}(2\x(k) - \z(k)) \\
	& \z(k+1) = \z(k) + \y(k) - \x(k)
\end{split}
\end{align}
where the additional variable $\y$ is introduce to break the computations in smaller pieces. Let now $\z^*$ be a fixed point of $\mathcal{T}_{DR}$ computed by the DRS, the corresponding solution to the original problem can be computed as $\x^* = \prox_{\rho \varphi}(\z^*)$.

Under the assumptions above, for the $\varphi$ and $\gamma$ functions, it is possible to prove that the two splitting methods converge to a fixed point of the respective operator, and thus that they reach a solution to the original problem.

In particular, for the FBS it is possible to show that \cite{taylor2017convex}
\begin{equation}\label{eq:convergence-splitting}
	\norm{\x(k+1) - \x^*} \leq \zeta_{\textrm{FB}} \norm{\x(k) - \x^*}.
\end{equation}
where
\begin{equation}\label{eq:zeta_fbs}
	\zeta_{\textrm{FB}} =  \max\{ |1-\rho m|, |1-\rho L| \},
\end{equation}
and recursively, $\norm{\x(k) - \x^*} \leq \zeta_{\textrm{FB}}^k \norm{\x(0) - \x^*}$ 

For the DRS, instead, the following Lemma holds.

\begin{lemma}\label{lem:convergence-drs}
The sequence $\{\x(k)\}_{k \in \mathbb{N}}$ generated by the DRS according to \eqref{eq:drs} satisfies the following inequality
\begin{equation}\label{eq:convergence-drs}
	\norm{\x(k) - \x^*} \leq {\zeta}^k_\textrm{DR} \frac{1+\rho L}{1+\rho m} \norm{\x(0) - \x^*},
\end{equation}		
\begin{equation}\label{eq:zeta_drs}
	{\zeta}_\textrm{DR} =  \max\Big\{ \frac{1}{1+\rho m}, \frac{\rho L}{1+\rho L} \Big\}.
\end{equation} \endstatement
\end{lemma}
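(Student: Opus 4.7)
The plan is to prove the bound in two stages. First, I would establish geometric contraction of the DR operator on the auxiliary variable $\z$, that is $\norm{\z(k+1)-\z^*} \leq \zeta_{\mathrm{DR}}\norm{\z(k)-\z^*}$. Second, I would translate this bound to the primal variable using the identity $\z = \x + \rho\nabla\varphi(\x)$ that holds whenever $\x = \prox_{\rho\varphi}(\z)$, which follows from the first-order optimality conditions of the proximal problem (since $\varphi$ is smooth and single-valued).

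For the first stage, pick arbitrary $\z, \z'$ and set $\x = \prox_{\rho\varphi}(\z)$, $\x' = \prox_{\rho\varphi}(\z')$. Writing $u = \x-\x'$ and $v = \nabla\varphi(\x)-\nabla\varphi(\x')$, the identity above gives $\z-\z' = u+\rho v$ while $\refl_{\rho\varphi}\z - \refl_{\rho\varphi}\z' = u-\rho v$. The key estimate is $\norm{u-\rho v} \leq \eta\norm{u+\rho v}$ with $\eta = \max\{|1-\rho m|/(1+\rho m),\,|1-\rho L|/(1+\rho L)\}$; this follows from a two-variable analysis of the ratio $\norm{u-\rho v}^2/\norm{u+\rho v}^2$ subject to the joint constraints imposed by $m$-strong monotonicity and $L$-Lipschitz continuity of $\nabla\varphi$ on the pair $(u,v)$. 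Since $\refl_{\rho\gamma}$ is nonexpansive (reflection of the resolvent of the maximal monotone operator $\partial\gamma$), the triangle inequality applied to $\mathcal{T}_{\mathrm{DR}} = (I + \refl_{\rho\gamma}\circ\refl_{\rho\varphi})/2$ yields $\norm{\mathcal{T}_{\mathrm{DR}}\z-\mathcal{T}_{\mathrm{DR}}\z'} \leq \tfrac{1+\eta}{2}\norm{\z-\z'}$, and a direct algebraic check shows $(1+\eta)/2 = \zeta_{\mathrm{DR}}$ in the regime $\rho m \leq 1 \leq \rho L$ for which the stated formula is meaningful. Iterating gives $\norm{\z(k)-\z^*} \leq \zeta_{\mathrm{DR}}^k\norm{\z(0)-\z^*}$.

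For the second stage, I invoke the standard $(1+\rho m)^{-1}$-contractivity of $\prox_{\rho\varphi}$, a direct consequence of $m$-strong convexity of $\varphi$, to obtain $\norm{\x(k)-\x^*} \leq \norm{\z(k)-\z^*}/(1+\rho m)$. Next, applying the relation $\z = \x + \rho\nabla\varphi(\x)$ at both the initial point $(\x(0),\z(0))$ and the fixed point $(\x^*,\z^*)$, together with the $L$-Lipschitzness of $\nabla\varphi$, yields $\norm{\z(0)-\z^*} \leq (1+\rho L)\norm{\x(0)-\x^*}$. Chaining these three inequalities produces the bound stated in the lemma. The main obstacle is the reflected-resolvent estimate $\norm{u-\rho v} \leq \eta\norm{u+\rho v}$: it is a constrained quadratic extremal problem in $(\norm{u},\norm{v},\langle u,v\rangle)$, and the worst case must be shown to be attained on the extremal ``slopes'' $m$ and $L$ of $\nabla\varphi$, with a case analysis on the signs of $1-\rho m$ and $1-\rho L$ required to match the precise form of $\zeta_{\mathrm{DR}}$.
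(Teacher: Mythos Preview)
Your proposal is correct and follows the same three-step structure as the paper's proof: (i) linear contraction of the DR operator in the $\z$-variable, (ii) passage from $\z$ to $\x$ via the $(1+\rho m)^{-1}$-contractivity of $\prox_{\rho\varphi}$, and (iii) passage from $\z(0)$ back to $\x(0)$ via the reverse inequality. The only substantive difference is that the paper outsources step (i) to a citation \cite{taylor2017convex} and obtains step (iii) by invoking $1/(1+\rho L)$-strong monotonicity of $\prox_{\rho\varphi}$ from \cite{giselsson2014diagonal}, whereas you derive both from scratch: step (i) via the reflected-resolvent contraction $\norm{u-\rho v}\le\eta\norm{u+\rho v}$ combined with averaging, and step (iii) via the explicit identity $\z=\x+\rho\nabla\varphi(\x)$ plus $L$-Lipschitzness of $\nabla\varphi$. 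These are equivalent arguments, so your route is a self-contained version of what the paper cites. One small remark: your restriction to the regime $\rho m\le 1\le\rho L$ is unnecessary, since the identity $(1+\eta)/2=\zeta_{\mathrm{DR}}$ with $\eta=\max\{|1-\rho m|/(1+\rho m),|1-\rho L|/(1+\rho L)\}$ holds for all $\rho>0$ once one checks which branch of each max is active.
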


\begin{proof}
Given in the Appendix.
\end{proof}

In the following, we will use $\zeta(k)$ as a function of $k$ to indicate either $\zeta^k_{\textrm{FB}}$ or $\zeta^k_{\textrm{DR}}\frac{1+\rho L}{1+\rho m}$.

\subsection{Solving generalized equations}
As mentioned above, we are now interested in solving the prediction and correction steps, \textit{i.e.}, the generalized equations \eqref{eq:approx-gen-equation} and \eqref{eq:gen-equation}. That is, we need to minimize $h_k(\x) + g(x)$ for the prediction step, and $f(\x;t_{k+1}) + g(\x)$ for the correction step. For this, we can apply the operator splitting methods described in the previous part of this section.

However, the splitting methods are guaranteed to converge \textit{asymptotically} to the desired solution, but for practical reasons (finite length of the sampling period) we stop the algorithms after a fixed number of steps: $P$ for the prediction and $C$ for the correction phases. Therefore we obtain in both steps an approximation of the optimal solution.

Using the convergence results reported above, it is therefore possible to characterize the convergence rate of both splitting methods and quantify the reduction in the error after the prescribed number of steps.

We have now all the ingredients to fully describe the proposed prediction-correction algorithm, which is reported in Algorithm~\ref{alg:splitting-prediction-correction}. At every time $t_k$, we perform $P$ step of either the FBS or the DRS on the predicting generalized equation, 
\begin{equation*}
	\nabla h_k({\x}_{k+1|k}) + \partial g({\x}_{k+1|k}) \ni 0.
\end{equation*}
constructed for the prediction problem~\eqref{eq:approx-gen-equation} [cf. line~\ref{line1}]; this yields an approximate predictor $\tilde{\x}_{k+1|k}$. 

At time $t_{k+1}$, we observe the new cost function $f(\cdot;t_{k+1})$ [cf. line~\ref{line2}], and we perform $C$ steps of either the FBS or the DRS on the correcting generalized equation,
\begin{equation*}
	\nabla_{\x} f(\x;t_{k+1}) + \partial g(\x) \ni 0.
\end{equation*}
constructed for the correction problem~\eqref{eq:gen-equation} [cf. line~\ref{line3}]; this yields the approximate optimizer $\x_{k+1}$.  

\begin{algorithm}
\caption{Prediction-correction algorithm via splitting}
\label{alg:splitting-prediction-correction}
\begin{algorithmic}[1]
	\Require $\x_0$, step-size $\rho$, termination conditions $P$ and $C$, and specify a splitting method.
	\For{$k=0,1,\ldots$}
		\State // time $t_k$
		\State perform $P$ steps of the splitting with initial condition $\x_k$ by applying \eqref{eq:fbs} for FBS or \eqref{eq:drs} for DRS \label{line1}
		\State // time $t_{k+1}$
		\State observe the cost function $f(\cdot;t_{k+1})$\label{line2}
		\State perform $C$ steps of the splitting with initial condition the prediction $\tilde{\x}_{k+1|k}$ by applying \eqref{eq:fbs} for FBS or \eqref{eq:drs} for DRS\label{line3}
		\State set $\x_{k+1}$ equal to the last iterate of the splitting
	\EndFor
\end{algorithmic}
\end{algorithm}

\begin{remark}
We choose to use the same step-sizes for the prediction and correction steps, since both $f(\x;t_{k+1})$ and $h_k(\x)$ belong to $\mathcal{S}_{m,L}(\R^n)$ and thus the step-sizes can be chosen minimizing~\eqref{eq:zeta_fbs}~and~\eqref{eq:zeta_drs}.
\end{remark}

\section{Convergence Analysis}\label{sec:convergence}
Algorithm~\ref{alg:splitting-prediction-correction} entails the approximated solution of two optimization problems, therefore we do not expect the algorithm to converge exactly to the optimal trajectory. Instead, this section aims at proving the convergence of Algorithm~\ref{alg:splitting-prediction-correction} to a neighborhood of $\x^*(t_k) =: \x_k^*$ which depends on the sampling period $T_{\mathrm{s}}$.

We state the following results, which will be proved in the remainder of this Section and in the Appendix.

\begin{theorem}\label{th:linear-convergence}
Let Assumptions~\ref{as:first}-\ref{as:first-bis} hold, and choose the parameters of Algorithm~\ref{alg:splitting-prediction-correction} such that
\begin{equation}\label{eq:condition-th-1}
	\zeta(C) \left[ \zeta(P) + (\zeta(P) + 1) \frac{2L}{m} \right] < 1.
\end{equation}
Then the trajectory $\{\x_k\}_{k \in \mathbb{N}}$ generated by Algorithm~\ref{alg:splitting-prediction-correction} converges Q-linearly to a neighborhood of the optimal trajectory $\{\x_k^*\}_{k \in \mathbb{N}}$, and in particular
$$
	\limsup_{k \to \infty} \norm{\x_k - \x_k^*} = O(\zeta(C) T_{\mathrm{s}}),
$$
where function $\zeta(k)$ is either $\zeta^k_{\textrm{FB}}$ or $\zeta^k_{\textrm{DR}}\frac{1+\rho L}{1+\rho m}$, depending if one uses FBS or DRS in Algorithm~\ref{alg:splitting-prediction-correction}. 
\endstatement
\end{theorem}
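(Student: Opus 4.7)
The plan is to bound $\norm{\x_{k+1} - \x_{k+1}^*}$ via a triangle-inequality decomposition that introduces two auxiliary points: the exact fixed point $\hat{\x}_{k+1|k}^*$ of the predicting generalized equation \eqref{eq:approx-gen-equation}, and the true optimum $\x_{k+1}^*$ of the sampled problem \eqref{eq:ti-problem} at $t_{k+1}$. Since the correction step runs $C$ iterations of the chosen splitting on the exact generalized equation \eqref{eq:gen-equation} starting from $\tilde{\x}_{k+1|k}$, the contraction bound \eqref{eq:convergence-splitting} (or Lemma~\ref{lem:convergence-drs}) immediately gives $\norm{\x_{k+1} - \x_{k+1}^*} \le \zeta(C) \norm{\tilde{\x}_{k+1|k} - \x_{k+1}^*}$. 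Applying the same reasoning to the $P$ prediction iterations yields $\norm{\tilde{\x}_{k+1|k} - \hat{\x}_{k+1|k}^*} \le \zeta(P) \norm{\x_k - \hat{\x}_{k+1|k}^*}$, so what remains is to show that the mismatch between the predicting and correcting equations scales as $T_{\mathrm{s}}$ plus a controlled contribution proportional to $\norm{\x_k - \x_k^*}$.

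The core technical step is a sensitivity estimate for generalized equations: if $F_1,F_2$ are both $m$-strongly monotone and $\x_i^*$ solves $F_i(\x) + \partial g(\x) \ni 0$, then standard monotonicity manipulation (using $m$-strong monotonicity of $F_1$ and monotonicity of $\partial g$, combined with a Cauchy--Schwarz step) yields $\norm{\x_1^* - \x_2^*} \le \tfrac{1}{m}\norm{F_1(\x_2^*) - F_2(\x_2^*)}$. Applying this with $(F_1,F_2) = (\nabla h_k,\nabla_\x f(\cdot;t_{k+1}))$ at $\x_2^* = \x_{k+1}^*$, and expanding the residual by adding and subtracting $\nabla_\x f(\x_k;t_k)+\nabla_{\x\x} f(\x_k;t_k)(\x_{k+1}^*-\x_k)+T_{\mathrm{s}}\nabla_{t\x} f(\x_k;t_k)$, the difference splits into a purely temporal piece of order $T_{\mathrm{s}}$ (controlled by Assumption~\ref{as:first-bis}) and a purely spatial piece bounded by $2L\norm{\x_{k+1}^*-\x_k}$ via the $L$-smoothness of $f$. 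Applied instead to $(\nabla_\x f(\cdot;t_k),\nabla_\x f(\cdot;t_{k+1}))$, the same sensitivity result gives $\norm{\x_k^* - \x_{k+1}^*} = O(T_{\mathrm{s}})$; writing $\norm{\x_{k+1}^* - \x_k} \le \norm{\x_k-\x_k^*} + \norm{\x_k^*-\x_{k+1}^*}$ then produces
\begin{equation*}
\norm{\hat{\x}_{k+1|k}^* - \x_{k+1}^*} \le \tfrac{2L}{m}\norm{\x_k-\x_k^*} + O(T_{\mathrm{s}}).
\end{equation*}

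Combining everything through the triangle inequalities $\norm{\tilde{\x}_{k+1|k}-\x_{k+1}^*} \le \norm{\tilde{\x}_{k+1|k}-\hat{\x}_{k+1|k}^*} + \norm{\hat{\x}_{k+1|k}^*-\x_{k+1}^*}$ and $\norm{\x_k-\hat{\x}_{k+1|k}^*} \le \norm{\x_k-\x_k^*} + \norm{\x_k^*-\x_{k+1}^*} + \norm{\x_{k+1}^*-\hat{\x}_{k+1|k}^*}$ yields the scalar recursion
\begin{equation*}
\norm{\x_{k+1}-\x_{k+1}^*} \le \zeta(C)\!\left[\zeta(P) + (\zeta(P)+1)\tfrac{2L}{m}\right]\!\norm{\x_k-\x_k^*} + \zeta(C)\, O(T_{\mathrm{s}}).
\end{equation*}
Condition \eqref{eq:condition-th-1} is precisely what makes the coefficient of $\norm{\x_k-\x_k^*}$ strictly less than $1$, so summing the resulting geometric series produces Q-linear convergence with $\limsup_k \norm{\x_k-\x_k^*} = O(\zeta(C) T_{\mathrm{s}})$, as claimed.

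The main obstacle is rigorously establishing the generalized-equation sensitivity lemma and tracking its constants carefully. Since only Assumptions~\ref{as:first}--\ref{as:first-bis} are invoked (not the third-derivative Assumption~\ref{as:second}), the spatial Taylor remainder cannot be squeezed below first order in $\norm{\x_{k+1}^*-\x_k}$; this is precisely why the factor $\tfrac{2L}{m}$ is forced inside the contraction coefficient rather than absorbed into the $O(T_{\mathrm{s}})$ noise term, and is what dictates the specific shape of condition \eqref{eq:condition-th-1}.
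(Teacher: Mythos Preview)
Your proposal is correct and follows essentially the same route as the paper's proof: the paper also combines the contraction bounds for the $P$ prediction and $C$ correction iterations with a sensitivity estimate for the predicting generalized equation (their Theorem~\ref{th:sol-mapping}, which is exactly your ``$\norm{\x_1^*-\x_2^*}\le m^{-1}\norm{F_1(\x_2^*)-F_2(\x_2^*)}$'' lemma in parametrized form), and then chains the same triangle inequalities to obtain the recursion $\norm{\x_{k+1}-\x_{k+1}^*}\le \eta_1\norm{\x_k-\x_k^*}+\eta_0$ with $\eta_1$ equal to the left-hand side of \eqref{eq:condition-th-1}. The only cosmetic difference is that the paper packages the sensitivity result as a Lipschitz solution-mapping theorem and references \cite{simonetto2017prediction} for the explicit $2L/m$ and $O(T_{\mathrm{s}})$ constants, whereas you derive them directly from $L$-smoothness and Assumption~\ref{as:first-bis}.
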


\smallskip

\begin{theorem}\label{th:quadratic-convergence}
Let Assumptions~\ref{as:first},~\ref{as:first-bis},~and~\ref{as:second} hold, and select the prediction and correction horizons $P$ and $C$, and $\tau \in (0,1)$, such that $\zeta({P+C}) < \tau$.

\noindent Then there exist an upper bound for the sampling time $\bar{T}_{\mathrm{s}}$ and a convergence region $\bar{R}$ such that if $T_{\mathrm{s}} < \bar{T}_{\mathrm{s}}$ and $\norm{\x_0 - \x_0^*} < \bar{R}$, then
$$
	\limsup_{k \to \infty} \norm{\x_k - \x_k^*} = O(\zeta(C) T_{\mathrm{s}}^2) + O(\zeta({P+C}) T_{\mathrm{s}}).
$$
In particular, the bound for the sampling time and the convergence region are characterized by
$$
	\bar{T}_{\mathrm{s}} = \frac{\tau - \zeta(P+C)}{\zeta(C) (\zeta(P) + 1)} \left( \frac{C_0C_1}{m^2} + \frac{C_2}{m} \right)^{-1}
$$
and
$$
	\bar{R} = \frac{2m}{C_1} \left( \frac{C_0C_1}{m^2} + \frac{C_2}{m} \right) (\bar{T}_{\mathrm{s}} - T_{\mathrm{s}}).
$$
Function $\zeta(k)$ is either $\zeta^k_{\textrm{FB}}$ or $\zeta^k_{\textrm{DR}}\frac{1+\rho L}{1+\rho m}$, depending if one uses FBS or DRS in Algorithm~\ref{alg:splitting-prediction-correction}. 
\endstatement
\end{theorem}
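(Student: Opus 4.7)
The plan is to mirror the decomposition used for Theorem~\ref{th:linear-convergence}, but to tighten the bound on the prediction error by invoking the extra smoothness of Assumption~\ref{as:second}. Let $\tilde{\x}_{k+1|k}^*$ denote the \emph{exact} solution of the approximated generalized equation~\eqref{eq:approx-gen-equation}. I would first apply the splitting-method contraction to the correction phase, then to the prediction phase, and finally bound the gap between the two generalized equations~\eqref{eq:approx-gen-equation} and~\eqref{eq:gen-equation}:
\begin{align*}
  \norm{\x_{k+1} - \x_{k+1}^*}
    &\leq \zeta(C)\, \norm{\tilde{\x}_{k+1|k} - \x_{k+1}^*} \\
    &\leq \zeta(C)\bigl[\zeta(P)\norm{\x_k - \tilde{\x}_{k+1|k}^*} + \norm{\tilde{\x}_{k+1|k}^* - \x_{k+1}^*}\bigr].
\end{align*}

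The main step is to replace the crude bound on $\norm{\tilde{\x}_{k+1|k}^* - \x_{k+1}^*}$ used for Theorem~\ref{th:linear-convergence} by a higher-order estimate. By $m$-strong monotonicity of the generalized equation~\eqref{eq:gen-equation},
\[
  m\,\norm{\tilde{\x}_{k+1|k}^* - \x_{k+1}^*} \leq \norm{\nabla h_k(\x_{k+1}^*) - \nabla_{\x} f(\x_{k+1}^*; t_{k+1})}.
\]
The right-hand side is precisely the remainder of the first-order Taylor expansion of $\nabla_{\x} f(\cdot;\cdot)$ around $(\x_k,t_k)$ evaluated at $(\x_{k+1}^*,t_{k+1})$. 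Using Assumption~\ref{as:second}, this remainder is bounded by
\[
  \tfrac{C_1}{2}\norm{\x_{k+1}^* - \x_k}^2 + C_2\, T_{\mathrm{s}}\norm{\x_{k+1}^* - \x_k} + \tfrac{C_3}{2}\, T_{\mathrm{s}}^2.
\]
Splitting $\x_{k+1}^* - \x_k = (\x_{k+1}^* - \x_k^*) + (\x_k^* - \x_k)$, and noting that strong convexity together with Assumption~\ref{as:first-bis} gives $\norm{\x_{k+1}^* - \x_k^*} \leq (C_0/m)\,T_{\mathrm{s}}$, I obtain a bound of the form $O(T_{\mathrm{s}}^2) + O(T_{\mathrm{s}})\norm{\x_k - \x_k^*} + O(1)\norm{\x_k - \x_k^*}^2$. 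Similarly, $\norm{\x_k - \tilde{\x}_{k+1|k}^*}$ is handled via the triangle inequality and the same perturbation argument, producing $\norm{\x_k - \x_k^*}+O(T_{\mathrm{s}})$ up to constants depending on $C_0,C_1,C_2,m,L$.

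Substituting back produces a scalar recursion of the shape
\[
  e_{k+1} \leq \zeta(P+C)\, e_k + \zeta(C)\bigl[A\,T_{\mathrm{s}}^2 + B\,T_{\mathrm{s}}\,e_k + D\,e_k^2\bigr] + \zeta(P+C)\,E\,T_{\mathrm{s}},
\]
with $e_k := \norm{\x_k - \x_k^*}$ and explicit constants. The key difficulty—and the reason a local assumption $\norm{\x_0-\x_0^*}<\bar{R}$ is needed—is the quadratic term $D e_k^2$: the recursion is only contractive inside a ball where $\zeta(P+C) + \zeta(C)(B T_{\mathrm{s}} + D e_k) < \tau < 1$. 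I would therefore fix $\tau$ with $\zeta(P+C)<\tau$ and invoke an induction to show that if $T_{\mathrm{s}}<\bar{T}_{\mathrm{s}}$ and $e_0<\bar{R}$, then $e_k<\bar{R}$ for all $k$; the thresholds $\bar{T}_{\mathrm{s}}$ and $\bar{R}$ in the statement arise precisely from imposing the contractivity condition on the upper envelope and matching constants $A,B,D,E$ with $C_0/m^2,C_1/m,C_2/m$, etc. Once invariance is established, the geometric series gives $\limsup_k e_k \leq \zeta(C)A T_{\mathrm{s}}^2/(1-\tau) + \zeta(P+C) E T_{\mathrm{s}}/(1-\tau)$, which is the announced $O(\zeta(C)T_{\mathrm{s}}^2)+O(\zeta(P+C)T_{\mathrm{s}})$ bound. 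The hard part is carrying out the induction cleanly while keeping the constants explicit enough to yield the stated closed-form $\bar{T}_{\mathrm{s}}$ and $\bar{R}$.
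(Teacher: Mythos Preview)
Your proposal is correct and follows essentially the same route as the paper. The paper packages the step $m\,\norm{\tilde{\x}_{k+1|k}^* - \x_{k+1}^*} \leq \norm{\nabla h_k(\x_{k+1}^*) - \nabla_{\x} f(\x_{k+1}^*;t_{k+1})}$ as a separate solution-mapping Lipschitz result (Theorem~\ref{th:sol-mapping}) and applies it via auxiliary functions $\psi,\Psi$, but the content is exactly the strong-monotonicity argument you wrote; the resulting quadratic recursion $e_{k+1}\leq \eta_2 e_k^2+\eta_1 e_k+\eta_0$ and the local induction under $\eta_1<\tau$ are identical to yours.
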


\smallskip

\begin{remark}
Notice that in Theorem~\ref{th:linear-convergence} the asymptotical error bound depends linearly on the sampling time; this result agrees with the fact that the smaller $T_{\mathrm{s}}$ is, the better the algorithm can track the optimal trajectory.

Moreover, if bounds on the third order derivatives of $f$ are available, then we can characterize the convergence to a smaller neighborhood of the optimal trajectory. Indeed Theorem~\ref{th:quadratic-convergence} shows that the asymptotical error depends on the square of $T_{\mathrm{s}}$ and thus is tighter than the result of Theorem~\ref{th:linear-convergence}.
\end{remark}

\begin{remark}
Theorem~\ref{th:linear-convergence} holds globally, while Theorem~\ref{th:quadratic-convergence} holds only if the initial condition $\x_0$ is selected in a neighborhood $\bar{R}$ of the optimum. Moreover, notice that the larger we choose $T_\mathrm{s}$, the smaller $\bar{R}$ becomes, again in accordance with the fact that the closer we sample the problem the better we approximate the optimal trajectory.
\end{remark}

\subsection{Proof of Theorems~\ref{th:linear-convergence} and~\ref{th:quadratic-convergence}}

The full proof of Theorems~\ref{th:linear-convergence} and~\ref{th:quadratic-convergence} follows closely along the lines of \cite[Appendix~B]{simonetto2017prediction}, and it is reported in the Appendix for completeness. Here however, we give a sketch of the reasoning and a result on implicit solution mappings, which enables us to generalize~\cite{dontchev2014implicit} to our setting. 

Recall from Algorithm~\ref{alg:splitting-prediction-correction} that the following sources of error are present: \emph{(i)} the Taylor approximation used during the prediction step, and \emph{(ii-iii)} the early termination of the splitting method during both the prediction and correction steps. Thus, in order to evaluate the overall error committed by the proposed algorithm, i.e., $\norm{\x_k - \x_k^*}$, we need to derive a bound on the errors coming from \emph{(i)}, \emph{(ii)}, and \emph{(iii)}.

The proof proceeds as follows. We bound the Taylor approximation used during the prediction step by using a result in implicit solution mappings (see Theorem~\ref{th:sol-mapping}). We then bound the error coming from the early termination of the splitting method during prediction, by using the linear convergence rate of the splitting method (cf. Eq.s~\eqref{eq:convergence-splitting}~and~\eqref{eq:convergence-drs}). The next step is the derivation of a bound on the early termination of the splitting method during correction, which is analogous to the one during prediction. Finally, we put everything together and derive an error recursion, from which the claims.

Since the main novelty of the proof is the mentioned result in implicit solution mappings, we present this here. First of all, consider the parametrized generalized equation\footnote{In which the differentiation is taken w.r.t. to the $\x$.}
\begin{equation}\label{eq:param-gen-equation}
	\nabla \varphi(\x) + \partial \gamma(\x) \ni \p
\end{equation}
where $\p \in \R^d$ is a parameter, and define the \textit{solution mapping} of \eqref{eq:param-gen-equation} as
\begin{equation}\label{eq:solution-mapping}
	S(\p) = \{ \y\ |\ \nabla \varphi(\y) + \partial \gamma(\y) \ni \p \}.
\end{equation}
We can characterize the Lipschitz continuity of the solution mapping \eqref{eq:solution-mapping} using the following Theorem~\ref{th:sol-mapping}\footnote{This result extends Theorem 2F.6 in \cite{dontchev2014implicit}, which holds only if $\gamma$ is the indicator function of a convex set, and thus $\partial \gamma$ is a normal cone -- that is, it holds only for variational inequalities and not generalized equations. Theorem~\ref{th:sol-mapping} here resembles~\cite[Th.~1]{nesterov_smooth_2005}, although the latter is applied to a slightly different setting. }.

\begin{theorem}\label{th:sol-mapping}
Let $\varphi \in \mathcal{S}_{m,L}(\R^n)$ and $\gamma \in \Gamma_0(\R^n)$, then the solution mapping \eqref{eq:solution-mapping} of the parameterized equation \eqref{eq:param-gen-equation} is single-valued and $m^{-1}$-Lipschitz continuous. \endstatement
\end{theorem}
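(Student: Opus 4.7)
The plan is to establish the two claims (single-valuedness and $m^{-1}$-Lipschitz continuity) by reducing the generalized equation to a convex minimization problem and then exploiting two standard monotonicity facts: strong monotonicity of $\nabla\varphi$ (since $\varphi\in\mathcal{S}_{m,L}(\R^n)$), and monotonicity of $\partial\gamma$ (since $\gamma\in\Gamma_0(\R^n)$).

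For single-valuedness, I would first note that $\nabla\varphi(\y)+\partial\gamma(\y)\ni\p$ is exactly the first-order optimality condition for minimizing the function $\y\mapsto \varphi(\y)-\langle\p,\y\rangle+\gamma(\y)$. Since the linear perturbation $-\langle\p,\cdot\rangle$ preserves $m$-strong convexity, the objective is $m$-strongly convex and belongs to $\Gamma_0(\R^n)$, so it admits a unique minimizer. Hence $S(\p)$ is a singleton for every $\p\in\R^n$.

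For the Lipschitz estimate, I would pick any $\p_1,\p_2\in\R^n$ and set $\y_i=S(\p_i)$. By definition of the solution mapping, there exist subgradients $\z_i\in\partial\gamma(\y_i)$ with $\nabla\varphi(\y_i)+\z_i=\p_i$. Taking the difference and pairing with $\y_1-\y_2$ gives
\begin{equation*}
\langle\p_1-\p_2,\y_1-\y_2\rangle=\langle\nabla\varphi(\y_1)-\nabla\varphi(\y_2),\y_1-\y_2\rangle+\langle\z_1-\z_2,\y_1-\y_2\rangle.
\end{equation*}
Monotonicity of $\partial\gamma$ makes the second term nonnegative, while $m$-strong convexity of $\varphi$ bounds the first by $m\norm{\y_1-\y_2}^2$. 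Applying Cauchy--Schwarz on the left-hand side yields $\norm{\p_1-\p_2}\norm{\y_1-\y_2}\geq m\norm{\y_1-\y_2}^2$, hence $\norm{S(\p_1)-S(\p_2)}\leq m^{-1}\norm{\p_1-\p_2}$.

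I do not foresee a real obstacle: the argument is short and the only subtlety is being careful that $\partial\gamma$ is a maximally monotone operator (so that its monotonicity inequality is available at each $\y_i$ with any selection of subgradients), which is guaranteed by $\gamma\in\Gamma_0(\R^n)$. No smoothness of $\gamma$ is used, which is precisely the point distinguishing this statement from the variational-inequality version of \cite[Th.~2F.6]{dontchev2014implicit}, where $\gamma$ is the indicator of a convex set.
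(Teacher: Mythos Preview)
Your proposal is correct and essentially matches the paper's own proof: both parts argue single-valuedness via strong convexity of $\varphi(\cdot)+\gamma(\cdot)-\langle\p,\cdot\rangle$, and Lipschitz continuity by pairing the difference of the two inclusions with $\y_1-\y_2$, using strong monotonicity of $\nabla\varphi$ together with monotonicity of $\partial\gamma$, and concluding via Cauchy--Schwarz. The only cosmetic difference is that the paper derives the monotonicity of $\partial\gamma$ directly from the subdifferential inequality, whereas you invoke it as a standard fact.
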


\begin{proof}
The proof is divided in two parts.

\paragraph{Single-valuedness}
The solutions of \eqref{eq:param-gen-equation} are the solutions of
$$
	\y^* = \argmin_\y \{ \varphi(\y) + \gamma(\y) - \langle \p,\y \rangle \}.
$$
But since $\varphi$ is strongly convex, so is the whole objective function once the parameter $\p$ is fixed, which implies that the problem has a unique solution for each $\p$.

\paragraph{Lipschitz continuity}
Let $\y = S(\p)$ and $\w = S(\mathbf{q})$ for $\p,\mathbf{q} \in \R^n$. In order to prove Lipschitz continuity we need to show that $\norm{\y - \w} \leq m^{-1} \norm{\p - \mathbf{q}}$. By the definition of solution mapping it holds $\p - \nabla \varphi(\y) \in \partial \gamma(\y)$ and $\mathbf{q} - \nabla \varphi(\w) \in \partial \gamma(\w)$; therefore by the definition of the subdifferential it must hold
\begin{align}
	\langle \z - \y, \p - \nabla \varphi(\y) \rangle & \leq \gamma(\z) - \gamma(\y) \label{eq:y-subdiff} \\
	\langle \z - \w, \mathbf{q} - \nabla \varphi(\w) \rangle & \leq \gamma(\z) - \gamma(\w) \label{eq:w-subdiff}
\end{align}
for any $\z \in \R^n$. In particular, inequality \eqref{eq:y-subdiff} is satisfied for $\z = \w$, and \eqref{eq:w-subdiff} for $\z = \y$; changing sign in the latter, the following inequalities hold
$$
	\langle \w - \y, \p - \nabla \varphi(\y) \rangle \leq \gamma(\w) - \gamma(\y) \leq \langle \w - \y, \mathbf{q} -\nabla\varphi(\w) \rangle
$$
or, equivalently,
$$
	\langle \w - \y, \nabla \varphi(\w) - \nabla \varphi(\y) \rangle \leq \langle \w - \y, \mathbf{q} - \p \rangle.
$$
Since $\nabla \varphi$ is the derivative of an $m$-strongly convex function, then it is strongly monotone \cite{bauschke2017convex}, in the sense that for any $\x,\y \in \R^n$ it holds
$$
	\langle \x - \y, \nabla \varphi(\x) - \nabla \varphi(\y) \rangle \geq m \norm{\x - \y}^2.
$$
With this result and the fact that $\langle \w - \y, \mathbf{q} - \p \rangle \leq \norm{\w - \y} \norm{\mathbf{q} - \p}$, it follows then
\begin{align*}
	m \norm{\w - \y}^2 \leq \langle \w - \y, \nabla \varphi(\w) - & \nabla \varphi(\y) \rangle \\ & \leq \norm{\w - \y} \norm{\mathbf{q} - \p}
\end{align*}
which proves the desired result.
\end{proof}

\section{Simulations}\label{sec:simulation}

To assess the practical performance of the proposed methods, we present some simple yet realistic numerical simulations for a \emph{leader following} case problem.

\subsection{Problem formulation}
We are interested in solving a leader following problem for a group of $N$ robots in a \emph{rigid formation}. In particular, the leader (or $0$-th robot) follows freely a trajectory $\{ \x_k^0 \}_{k \in \mathbb{N}}$ on the plane, and the follower robots (labeled $1$ through $N$) must: \textit{(i)} follow the leader, while \textit{(ii)} keeping a fixed formation. Hereafter, we denote with $\x^i \in \R^2$ the position of the $i$-th robot, and define $\x = [ (\x^0)^\top, (\x^1)^\top, \ldots, (\x^N)^\top ]^\top \in \R^{2(N+1)}$.

The followers, however, have access only to a noisy and partial measurement of the leader's position, which is given by $z_{k+1}^i = \mathbf{v}_i^\top \x_{k+1}^0 + n^i_{k+1}$ where $\mathbf{v}_i \in \{ [1, 0]^\top, [0, 1]^\top \}$ and the noise is Gaussian $n^i \sim \mathcal{N}(0, \sigma_i)$.

Inspired by the formulation presented in~\cite{dixit2018online}, the idea is then to estimate $\x^0$ by solving a least squares problem as
$$
	f(\x;t_{k+1}) = \sum_{i=1}^N \frac{1}{2} \left( z_{k+1}^i - \mathbf{v}_i^\top \x^0 \right)^2 + \frac{\lambda}{2} \norm{\x - \x_k}_2^2.
$$

The formation is to be kept constant, with the leader robot in the center and the followers at a distance $d$ from it, with equal angles between them. By defining a reference frame centered in the leader, with $x$-axis pointing in the direction of motion, the position of the $i$-th robot must satisfy
\begin{align*}
	\x^i - \x^0 = d \begin{bmatrix}
		\cos(2\pi(i-1)/N) \\
		\sin(2\pi(i-1)/N)
	\end{bmatrix}.
\end{align*}
These constraints are collected in the linear system $\A \x = \bv$:
$$
\footnotesize
	\A = \begin{bmatrix}
		-I & I & 0 & \dots & 0 \\
		-I & 0 & I & \dots & 0 \\
		\vdots & \vdots &  & \ddots \\
		-I & 0 & \dots & 0 & I
	\end{bmatrix}, \quad
	\bv = d \begin{bmatrix}
		\vdots \\
			\cos\left(\frac{2\pi(i-1)}{N}\right) \\
			\sin\left(\frac{2\pi(i-1)}{N}\right)
		 \\
		\vdots
	\end{bmatrix}
$$
and imposed by choosing $g(\x) = \iota_{\A\x = \bv}(\x)$.

Notice that the problem is solved by a fusion center that collects the measurements and then dictates the trajectory that the followers need to perform.


\subsection{Numerical results}
The simulations where carried out for a formation of $N=10$ followers, placed on the unitary circle around the leader ($d=1$). The noise variance was equal to $\sigma_i = 0.1$ for all robots, and six followers had $v_i = [1, 0]^\top$ while the other four had $v_i = [0, 1]^\top$. The regularization constant was $\lambda = 10$, for the FBS we set $\rho = 1/L$, while for DRS we set $\rho = 0.08$ (both to maximize the respective contraction ratio). The sampling time was set to $T_\mathrm{s} = 0.1 \si{s}$. The trajectory of the leader is a Lissajous curve with ratio 1:3 and spans an area of $[-3,3]^2$ with a period of about $40\si{s}$. Our performance metric is the tracking error, defined as $
	E_k = \norm{\x_k - \x_k^*}$. 

%
%
%

First of all, we evaluate the performance of FBS vs. DRS with $P=0, C=5$ (Figure~\ref{fig:tracking}-top); as we see, FBS outperforms DRS for this problem setting (but this is far from a general statement).

Second, we look at FBS for different values of the prediction horizon $P$, and with fixed correction horizon $C=5$ (Figure~\ref{fig:tracking}-middle). The presence of at least one prediction step improves the accuracy of the algorithm, thus justifying the choice of a prediction-correction scheme. We see also that, since we compute the prediction by using a backward derivative approximation of the time derivative of the gradient, and noise is added, the case $P=5$ hits the noise ``floor''. 

   \begin{figure}
      \centering
      \includegraphics[width=\columnwidth, trim=0cm 1cm 1cm 1cm, clip=on]{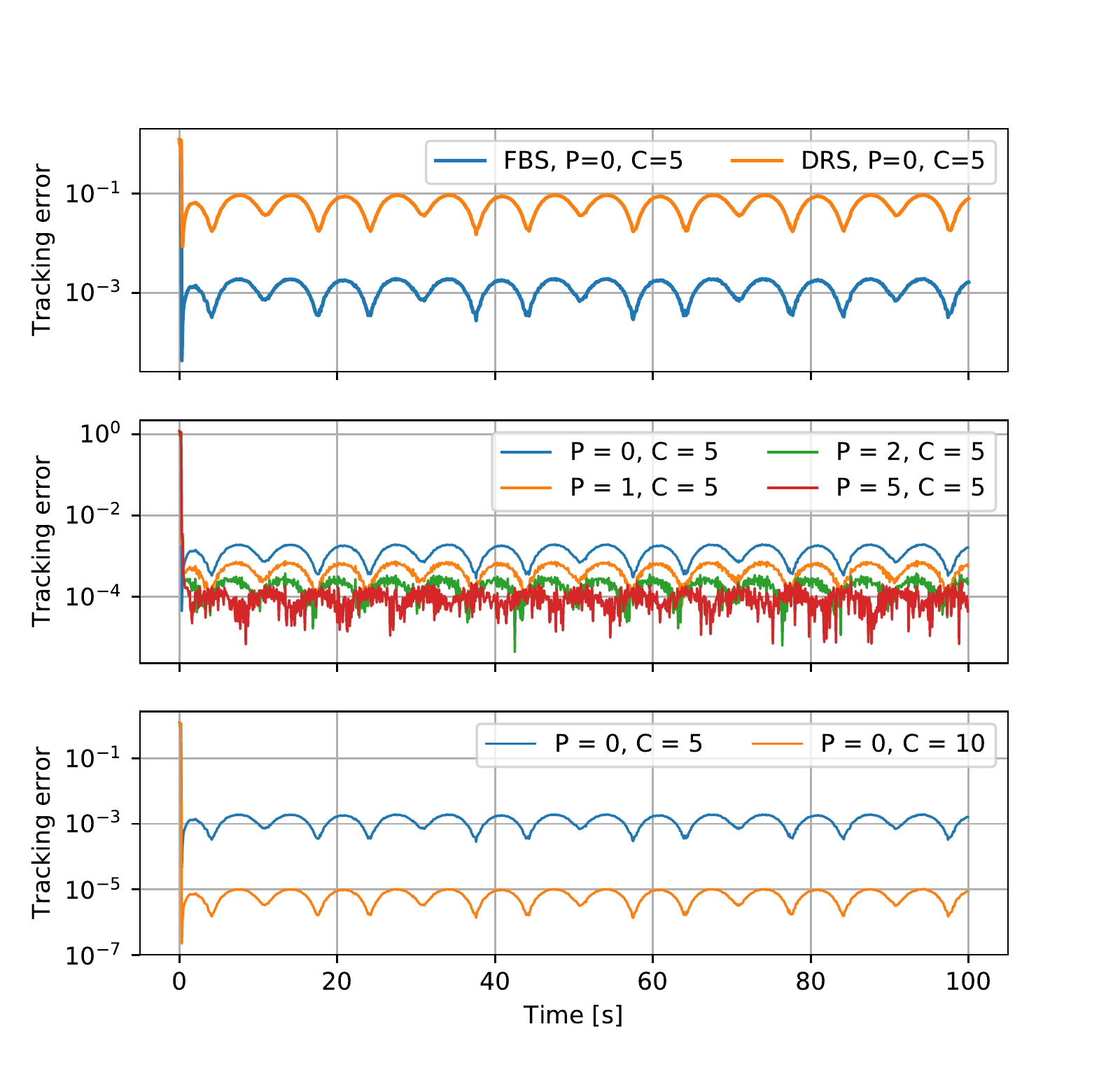}
      \caption{Tracking error in different settings. }
      \label{fig:tracking}
   \end{figure}

Figure~\ref{fig:tracking}-bottom depicts instead the error for different values of the correction steps for FBS, with $P=0$.

The smaller the number of correction steps is, the worse the asymptotical error. This result shows that augmenting $C$ seems to be more effective in reducing the tracking error than augmenting $P$, in this particular case. 
Note that however, correction steps are more computational expensive than prediction steps (since the latter are computed on a quadratic cost function). 

We then evaluate the effect of the sampling time on the performance of the proposed algorithm by computing the asymptotical error for different values of $T_\mathrm{s}$, $P$ and $C$, which is computed as $\max_{k > 2D/3} E_k$ with $D=100\si{s}$ the duration of the simulation. In this case (and for verification purposes only) we let the noise be depended on the sampling period as $\sigma_i = .01 T_\mathrm{s}$, so to remove its effect from the numerical results. (Naturally this is not the case in practice and $T_\mathrm{s}$ needs to be chosen to limit the effect of the noise on the prediction, as well as maintain an acceptable update frequency).  

   \begin{figure}
      \centering
      \includegraphics[width=\columnwidth,trim=0cm 0cm 1cm 1cm, clip=on]{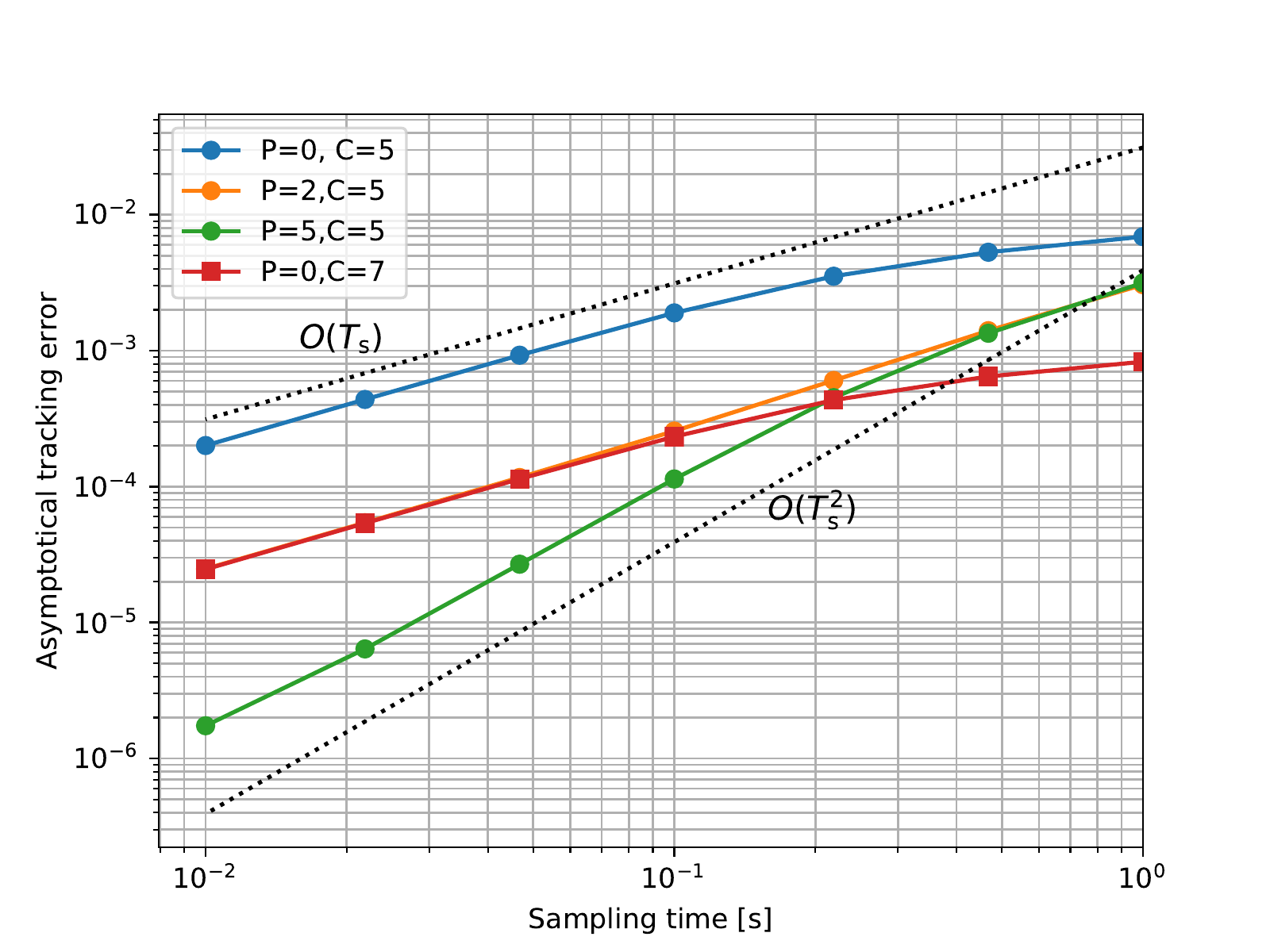}
      \caption{Asymptotical error of the FBS as a function of $T_\mathrm{s}$.}
      \label{fig:asymptotical}
   \end{figure}

In Figure~\ref{fig:asymptotical} shows that the larger the sampling time is, the worse the performance of the proposed algorithm is, since the sequence of problems \eqref{eq:ti-problem} ``tracks'' the original problem worse. Notice that, as expected from the theorems, we obtain the $O(T_\mathrm{s})$ and $O(T_\mathrm{s}^2)$ dependencies for the asymptotical error, further justifying the use of prediction steps. 

\section{Conclusions and Future Directions}\label{sec:conclusions}
In this work, we presented an algorithm to solve time-varying optimization problems that applies splitting operators in conjunction with a prediction-correction scheme. We described two theoretical results that guarantee the convergence of the sequence computed by the algorithm to a neighborhood of the optimal solutions that depends on the sampling time, or its square. Finally we described some numerical results obtained by applying the algorithm to a leader following problem in robotics. 

Future work will focus on extending the algorithm to solve problems characterized by weaker conditions, such as removing strong convexity. Moreover, the application to distributed optimization problems will be explored, as well as further acceleration via e.g., \cite{Fazlyab2017}.

\newpage


\appendix[Proof of Lemma~\ref{lem:convergence-drs}]

For the DRS it is possible to prove \cite{taylor2017convex} that the sequence $\{\z(k)\}_{k \in \mathbb{N}}$ converges Q-linearly with rate ${\zeta}_\textrm{DR}$ to $\z^*$, that is
\begin{equation}\label{eq:lin-convergence}
	\norm{\z(k+1) - \z^*} \leq {\zeta}_\textrm{DR} \norm{\z(k) - \z^*}.
\end{equation}
By the assumptions on $\varphi$ it follows \cite{giselsson2014diagonal} that $\prox_{\rho \varphi}(\x)$ is $1/(1+\rho m)$-contractive and $1/(1+\rho L)$-strongly monotone; therefore by using the fact that $\x(k) = \prox_{\rho \varphi}(\z(k))$, we obtain,
\begin{multline}
	\norm{\x(k)-\x^*} \leq \frac{1}{1+\rho m}\,\norm{\z(k) - \z^*} \leq  \\ \frac{1}{1+\rho m}\,{\zeta}_\textrm{DR}^k \norm{\z(0) - \z^*} \leq  \frac{1+\rho L}{1+\rho m}\,{\zeta}_\textrm{DR}^k \norm{\x(0) - \x^*},
\end{multline} 
from which the claim. \hfill$\blacksquare$
%
%
%

\appendix[Proof of Theorems \ref{th:linear-convergence} and \ref{th:quadratic-convergence}]

\subsection{Taylor approximation error}
By making use of Theorem~\ref{th:sol-mapping}, we derive an upper bound to the error introduced by the Taylor approximation. Consider the generalized equation \eqref{eq:approx-gen-equation} that characterizes the prediction step, and define the following functions:
\begin{align*}
	\Psi(\y) =& \nabla_{\x} f(\x_{k+1}^*;t_{k+1}) + \\ &+ \nabla_{\x\x} f(\x_{k+1}^*;t_{k+1}) (\y - \x_{k+1}^*) + \partial g(\y)
\end{align*}
and
\begin{align*}
	\psi(\y) =& \nabla_{\x} f(\x_k;t_k) + \nabla_{\x\x} f(\x_k;t_k) (\y - \x_k) + \\ &+ T_{\mathrm{s}} \nabla_{t\x} f(\x_k;t_k) - \Big[ \nabla_{\x} f(\x_{k+1}^*;t_{k+1}) + \\ & +\nabla_{\x\x} f(\x_{k+1}^*;t_{k+1}) (\y - \x_{k+1}^*) \Big].
\end{align*}
With these definitions the prediction problem \eqref{eq:approx-gen-equation} is equivalent to solving $(\psi + \Psi)(\x_{k+1|k}) \ni 0$, while the problem \eqref{eq:gen-equation} is equivalent to $\Psi(\x_{k+1}^*) \ni 0$, and therefore it holds $(\psi + \Psi)(\x_{k+1}^*) \ni \psi(\x_{k+1}^*)$.

Consider now the parametrized generalized equation
$$
	(\psi + \Psi)(\x) \ni \p
$$
with solution mapping $S(\p)$. By the results above we have $S(\p) = \x_{k+1|k}$ when $\p = 0$, and $S(\mathbf{q}) = \x_{k+1}^*$ when $\mathbf{q} = \psi(\x_{k+1}^*)$. Employing now Theorem~\ref{th:sol-mapping} it follows
$$
	\norm{\x_{k+1|k} - \x_{k+1}^*} = \norm{S(\p) - S(\mathbf{q})} \leq m^{-1} \norm{\psi(\x_{k+1}^*)}.
$$

If Assumptions~\ref{as:first}-\ref{as:first-bis} hold, then following the steps detailed in \cite[Appendix~B]{simonetto2017prediction} it is possible to derive the bound
\begin{equation}
	\norm{\x_{k+1|k} - \x_{k+1}^*} \leq \frac{2L}{m} \norm{\x_k - \x_k^*} + 2T_{\mathrm{s}} \frac{C_0}{m} (1 + L/m)
\end{equation}
for the error due to the Taylor approximation at the prediction step.

If moreover Assumption~\ref{as:second} holds then the following quadratic bound can be derived instead
\begin{align}
\begin{split}
	\norm{\x_{k+1|k} - \x_{k+1}^*} \leq& \frac{C_1}{2m} \norm{\x_k - \x_k^*}^2 + \\
	&+ T_{\mathrm{s}} \left( \frac{C_0C_1}{m^2} + \frac{C_2}{m} \right) \norm{\x_k - \x_k^*} + \\
	& + \frac{T_{\mathrm{s}}^2}{2} \left[ \frac{C_0^2C_1}{m^3} + \frac{2C_0C_2}{m^2} + \frac{C_3}{m} \right].
\end{split}
\end{align}

\subsection{Early termination error}
In Algorithm~\ref{alg:splitting-prediction-correction} both during the prediction and the correction steps, the selected splitting method is halted after $P$ and $C$ steps, respectively.

\paragraph{FBS} Making use of the convergence results reported in \eqref{eq:convergence-splitting} for the FBS, the prediction error after $P$ steps will be bounded as
\begin{equation}\label{eq:prediction-error}
	\norm{\tilde{\x}_{k+1|k} - \x_{k+1|k}} \leq \zeta_{\textrm{FB}}^P \norm{\x_k - \x_{k+1|k}}
\end{equation}
while the correction error after $C$ steps will be bounded as
\begin{equation}\label{eq:correction-error}
	\norm{\x_{k+1} - \x_{k+1}^*} \leq \zeta_{\textrm{FB}}^C \norm{\tilde{\x}_{k+1|k} - \x_{k+1}^*}.
\end{equation}

\paragraph{DRS} Applying instead Lemma~\ref{lem:convergence-drs} for the DRS it follows that the prediction error is bounded as
\begin{equation*}
	\norm{\tilde{\x}_{k+1|k} - \x_{k+1|k}} \leq \zeta(P) \norm{\x_k - \x_{k+1|k}}
\end{equation*}
where $\zeta(P) = \zeta_{\textrm{DR}}^P (1+\rho L)/(1+\rho m)$, while the correction error as
\begin{equation*}
	\norm{\x_{k+1} - \x_{k+1}^*} \leq \zeta(C) \norm{\tilde{\x}_{k+1|k} - \x_{k+1}^*}.
\end{equation*}

In the following, we will indicate with function $\zeta(k)$, either $\zeta_{\textrm{FB}}^k$ for FBS, or $\zeta_{\textrm{DR}}^P (1+\rho L)/(1+\rho m)$ for DRS. 

\subsection{Overall error bound}
Recall that our aim is to derive a bound for the error $\norm{\x_{k+1} - \x_{k+1}^*}$ that depends on the error at the previous time instant $\norm{\x_k - \x_k^*}$.

In order to do so we need one last result. Under Assumptions~\ref{as:first}-\ref{as:first-bis}, it is possible to see that the solution mapping $t_k \mapsto \x^*(t_k)$ satisfies Theorem~\ref{th:sol-mapping}, and thus that
\begin{align}\label{eq:optimal-trajectory-ineq}
\begin{split}
	\norm{\x^*(t_{k+1}) - \x^*(t_k)} &\leq \frac{1}{m} \norm{\nabla_{t\x} f(\x;t)} (t_{k+1} - t_k) \\ & \leq \frac{C_0T_{\mathrm{s}}}{m}.
\end{split}
\end{align}
We are now ready to find the desired error bound.

Using the results above, we can now derive the following chain of inequalities
\begin{align*}
	& \norm{\tilde{\x}_{k+1|k} - \x_{k+1}^*} \\
	& \leq \norm{\tilde{\x}_{k+1|k} - \x_{k+1|k}} + \norm{\x_{k+1|k} - \x_{k+1}^*} \\
	& \leq \zeta(P) \norm{\x_k - \x_{k+1|k}} + \norm{\x_{k+1|k} - \x_{k+1}^*} \\
	& \leq \zeta(P) (\norm{\x_k -\x_k^*} + \norm{\x_k^* - \x_{k+1}^*} + \\
	& \qquad\qquad + \norm{\x_{k+1}^* - \x_{k+1|k}}) + \norm{\x_{k+1|k} - \x_{k+1}^*} \\
	& \leq \zeta(P) \norm{\x_k - \x_k^*} + (\zeta(P) + 1) \norm{\x_{k+1|k} - \x_{k+1}^*} + \\
	& \qquad\qquad + \zeta(P) \norm{\x_k^* - \x_{k+1}^*}
\end{align*}
which gives a bound for the prediction error. Moreover, employing~\eqref{eq:correction-error} it holds
\begin{multline*}
	\norm{\x_{k+1} - \x_{k+1}^*} \leq \zeta(C) \Big[ \zeta(P) \norm{\x_k - \x_k^*} + \\
	 + (\zeta(P) + 1) \norm{\x_{k+1|k} - \x_{k+1}^*} + \zeta(P) \norm{\x_k^* - \x_{k+1}^*} \Big]
\end{multline*}
and, under Assumptions~\ref{as:first}-\ref{as:first-bis}, we can write
$$
	\norm{\x_{k+1} - \x_{k+1}^*} \leq \eta_2 \norm{\x_k - \x_k^*}^2 + \eta_1 \norm{\x_k - \x_k^*} + \eta_0 
$$
where
\begin{align*}
	\eta_2 &= 0 \\
	\eta_1 &= \zeta(C) \left[ \zeta(P) + (\zeta(P) + 1) \frac{2L}{m} \right] \\
	\eta_0 &= \zeta(C) \left[ 2(\zeta(P)+1) \left( 1+\frac{L}{m} \right) + \zeta(P) \right] \frac{C_0 T_{\mathrm{s}}}{m}.
\end{align*}
If moreover Assumption~\ref{as:second} holds as well, then we have
\begin{align*}
	\eta_2 &= \zeta(C) (\zeta(P) + 1) \frac{C_1}{2m} \\
	\eta_1 &= \zeta(C) \left[ \zeta(P) + T_{\mathrm{s}} (\zeta(P) + 1) \left( \frac{C_0C_1}{m^2} + \frac{C_2}{m} \right) \right] \\
	\eta_0 &= \zeta(C) \Bigg[ \zeta(P) \frac{T_{\mathrm{s}}C_0}{m} + \\
	& \qquad + (\zeta(P) + 1) \frac{T_{\mathrm{s}}^2}{2} \Bigg( \frac{C_0^2C_1}{m^3} + \frac{2C_0C_2}{m^2} + \frac{C_3}{m} \Bigg) \Bigg].
\end{align*}

\subsection{Convergence proof}
As the previous section showed, under Assumptions~\ref{as:first}-\ref{as:first-bis} we can characterize the dynamics of the error with the inequality
$$
	\norm{\x_{k+1} - \x_{k+1}^*} \leq \eta_1 \norm{\x_k - \x_k^*} + \eta_0.
$$
Therefore, for the error to converge, it is necessary to choose the algorithm parameters in such a way that $\eta_1 < 1$. In particular, given the definition of $\eta_1$ and the rate $\zeta$, we need to make sure that the prediction and correction horizons, $P$ and $C$, and the step-size $\rho$, satisfy the condition $\eta_1 < 1$.

If this is the case, than it is clear that by the definition of $\eta_0$, Theorem~\ref{th:linear-convergence} holds.

Suppose now that Assumption~\ref{as:second} holds as well. We have convergence of the algorithm if the error performed at each iteration does not exceed the error of the previous iteration, that is if there exists $\tau < 1$ such that
$$
	\eta_2 \norm{\x_k - \x_k^*}^2 + \eta_1 \norm{\x_k - \x_k^*} + \eta_0 \leq \tau \norm{\x_k - \x_k^*} + \eta_0.
$$
Therefore the error converges if $\eta_1 < \tau < 1$ which implies
\begin{multline*}
	\zeta({P+C}) < \tau, \\ T_{\mathrm{s}} < \frac{\tau - \zeta({P+C})}{\zeta(C)(\zeta(P) + 1)} \left( \frac{C_0C_1}{m^2} + \frac{C_2}{m} \right)^{-1} =: \bar{T}_{\mathrm{s}},
\end{multline*}
and if the initial condition $\x_0$ is chosen such that
$$
	\norm{\x_0 - \x_0^*} \leq \frac{\tau - \eta_1}{\eta_2} =: \bar{R}.
$$
Using now the fact that
\begin{align*}
	\norm{\x_{k+1} - \x_{k+1}^*} &\leq \eta_2 \norm{\x_k - \x_k^*}^2 + \eta_1 \norm{\x_k - \x_k^*} + \eta_0 \\
	& \leq \tau \norm{\x_k - \x_k^*} + \eta_0
\end{align*}
we can compute
$$
	\norm{\x_k - \x_k^*} \leq \tau^k \norm{\x_0 - \x_0^*} + \eta_0 \frac{1 - \tau^k}{1 - \tau}
$$
which proves Theorem~\ref{th:quadratic-convergence} when $k \to \infty$. \hfill$\blacksquare$




\begin{thebibliography}{10}
	\providecommand{\url}[1]{#1}
	\csname url@rmstyle\endcsname
	\providecommand{\newblock}{\relax}
	\providecommand{\bibinfo}[2]{#2}
	\providecommand\BIBentrySTDinterwordspacing{\spaceskip=0pt\relax}
	\providecommand\BIBentryALTinterwordstretchfactor{4}
	\providecommand\BIBentryALTinterwordspacing{\spaceskip=\fontdimen2\font plus
		\BIBentryALTinterwordstretchfactor\fontdimen3\font minus
		\fontdimen4\font\relax}
	\providecommand\BIBforeignlanguage[2]{{%
			\expandafter\ifx\csname l@#1\endcsname\relax
			\typeout{** WARNING: IEEEtran.bst: No hyphenation pattern has been}%
			\typeout{** loaded for the language `#1'. Using the pattern for}%
			\typeout{** the default language instead.}%
			\else
			\language=\csname l@#1\endcsname
			\fi
			#2}}
	
	\bibitem{jerez2014embedded}
	J.~L. Jerez, P.~J. Goulart, S.~Richter, G.~A. Constantinides, E.~C. Kerrigan,
	and M.~Morari, ``Embedded online optimization for model predictive control at
	megahertz rates,'' \emph{{IEEE} Trans. Automat. Contr.}, vol.~59, no.~12, pp.
	3238--3251, 2014.
	
	\bibitem{hours2016parametric}
	J.-H. Hours and C.~N. Jones, ``A parametric nonconvex decomposition algorithm
	for real-time and distributed {NMPC},'' \emph{{IEEE} Trans. Automat. Contr.},
	vol.~61, no.~2, pp. 287--302, 2016.
	
	\bibitem{paternain2018}
	S.~Paternain, M.~Morari, and A.~Ribeiro, ``{A Prediction-Correction Method for
		Model Predictive Control},'' in \emph{ACC'18}, 2018.
	
	\bibitem{asif2010dynamic}
	M.~S. Asif and J.~Romberg, ``Dynamic updating for $\ell_1$ minimization,''
	\emph{IEEE J. Sel. Top. Signal Process.}, vol.~4, no.~2, pp. 421--434, 2010.
	
	\bibitem{asif2014sparse}
	------, ``Sparse recovery of streaming signals using $\ell_1$-homotopy,''
	\emph{{IEEE} Trans. Signal Processing}, vol.~62, no.~16, pp. 4209--4223,
	2014.
	
	\bibitem{charles2016dynamic}
	A.~S. Charles, A.~Balavoine, and C.~J. Rozell, ``Dynamic filtering of
	time-varying sparse signals via $\ell_1$ minimization,'' \emph{{IEEE} Trans.
		Signal Processing}, vol.~64, no.~21, pp. 5644--5656, 2016.
	
	\bibitem{verscheure2009time}
	D.~Verscheure, B.~Demeulenaere, J.~Swevers, J.~De~Schutter, and M.~Diehl,
	``Time-optimal path tracking for robots: A convex optimization approach,''
	\emph{{IEEE} Trans. Automat. Contr.}, vol.~54, no.~10, pp. 2318--2327, 2009.
	
	\bibitem{ardeshiri2011convex}
	T.~Ardeshiri, M.~Norrl{\"o}f, J.~L{\"o}fberg, and A.~Hansson, ``Convex
	optimization approach for time-optimal path tracking of robots with speed
	dependent constraints,'' \emph{IFAC Proceedings Volumes}, vol.~44, no.~1, pp.
	14\,648--14\,653, 2011.
	
	\bibitem{dixit2018online}
	R.~Dixit, A.~S. Bedi, R.~Tripathi, and K.~Rajawat, ``Online learning with
	inexact proximal online gradient descent algorithms,'' \emph{arXiv preprint
		arXiv:1806.00202}, 2018.
	
	\bibitem{dontchev2013euler}
	A.~L. Dontchev, M.~Krastanov, R.~T. Rockafellar, and V.~M. Veliov, ``An
	{Euler--Newton} continuation method for tracking solution trajectories of
	parametric variational inequalities,'' \emph{SIAM J. Control. Optim.},
	vol.~51, no.~3, pp. 1823--1840, 2013.
	
	\bibitem{Zhou2018}
	X.~Zhou, E.~{Dall'Anese}, L.~Chen, and A.~Simonetto, ``{An Incentive-Based
		Online Optimization Framework for Distribution Grids},'' \emph{{IEEE} Trans.
		Automat. Contr.}, vol.~63, no.~7, 2018.
	
	\bibitem{Zavala2010}
	V.~M. Zavala and M.~Anitescu, ``{Real-Time Nonlinear Optimization as a
		Generalized Equation},'' \emph{SIAM J. Control. Optim.}, vol.~48, no.~8, pp.
	5444 -- 5467, 2010.
	
	\bibitem{Dinh2012}
	Q.~T. Dinh, C.~Savorgnan, and M.~Diehl, ``{Adjoint-Based Predictor-Corrector
		Sequential Convex Programming for Parametric Nonlinear Optimization},''
	\emph{SIAM J. Optimiz.}, vol.~22, no.~4, pp. 1258 -- 1284, 2012.
	
	\bibitem{simonetto2016class}
	A.~Simonetto, A.~Mokhtari, A.~Koppel, G.~Leus, and A.~Ribeiro, ``A class of
	prediction-correction methods for time-varying convex optimization.''
	\emph{{IEEE} Trans. Signal Processing}, vol.~64, no.~17, pp. 4576--4591,
	2016.
	
	\bibitem{simonetto2017prediction}
	A.~Simonetto and E.~Dall’Anese, ``Prediction-correction algorithms for
	time-varying constrained optimization,'' \emph{{IEEE} Trans. Signal
		Processing}, vol.~65, no.~20, pp. 5481--5494, 2017.
	
	\bibitem{simonetto2018tac}
	A.~Simonetto, ``Dual prediction-correction methods for linearly constrained
	time-varying convex programs,'' \emph{IEEE Trans. Automat. Contr. (to
		appear)}, 2018.
	
	\bibitem{fazlyab2017prediction}
	M.~Fazlyab, S.~Paternain, V.~M. Preciado, and A.~Ribeiro,
	``Prediction-correction interior-point method for time-varying convex
	optimization,'' \emph{{IEEE} Trans. Automat. Contr.}, 2017.
	
	\bibitem{Rahili2015}
	S.~Rahili and W.~Ren, ``Distributed convex optimization for continuous-time
	dynamics with time-varying cost functions,'' \emph{{IEEE} Trans. Automat.
		Contr.}, vol.~62, no.~4, pp. 1590 -- 1605, 2017.
	
	\bibitem{fazlyab2016self}
	M.~Fazlyab, C.~Nowzari, G.~J. Pappas, A.~Ribeiro, and V.~M. Preciado,
	``Self-triggered time-varying convex optimization,'' in \emph{CDC'16}.\hskip
	1em plus 0.5em minus 0.4em\relax IEEE, 2016, pp. 3090--3097.
	
	\bibitem{ryu2016primer}
	E.~K. Ryu and S.~Boyd, ``Primer on monotone operator methods,'' \emph{Appl.
		Comput. Math}, vol.~15, no.~1, pp. 3--43, 2016.
	
	\bibitem{bauschke2017convex}
	H.~H. Bauschke and P.~L. Combettes, \emph{Convex Analysis and Monotone Operator
		Theory in Hilbert Spaces}.\hskip 1em plus 0.5em minus 0.4em\relax Springer,
	2017.
	
	\bibitem{combettes2005signal}
	P.~L. Combettes and V.~R. Wajs, ``Signal recovery by proximal forward-backward
	splitting,'' \emph{Multiscale Model. Simul.}, vol.~4, no.~4, pp. 1168--1200,
	2005.
	
	\bibitem{combettes2011proximal}
	P.~L. Combettes and J.-C. Pesquet, ``Proximal splitting methods in signal
	processing,'' in \emph{Fixed-point algorithms for inverse problems in science
		and engineering}.\hskip 1em plus 0.5em minus 0.4em\relax Springer, 2011, pp.
	185--212.
	
	\bibitem{taylor2017convex}
	A.~Taylor, ``Convex interpolation and performance estimation of first-order
	methods for convex optimization,'' Ph.D. dissertation, PhD thesis,
	Universit{\'e} catholique de Louvain, 2017.
	
	\bibitem{dontchev2014implicit}
	A.~L. Dontchev and R.~T. Rockafellar, \emph{Implicit Functions and Solution
		Mappings: A View from Variational Analysis}.\hskip 1em plus 0.5em minus
	0.4em\relax Springer, 2014.
	
	\bibitem{nesterov_smooth_2005}
	Y.~Nesterov, ``\BIBforeignlanguage{en}{Smooth minimization of non-smooth
		functions},'' \emph{\BIBforeignlanguage{en}{Math. Program.}}, vol. 103,
	no.~1, pp. 127--152, 2005.
	
	\bibitem{Fazlyab2017}
	M.~Fazlyab, A.~Koppel, V.~M. Preciado, and A.~Ribeiro, ``{A variational
		approach to dual methods for constrained convex optimization},'' in
	\emph{ACC'17}, 2017, pp. 5269 -- 5275.
	
	\bibitem{giselsson2014diagonal}
	P.~Giselsson and S.~Boyd, ``Diagonal scaling in {Douglas-Rachford} splitting
	and {ADMM},'' in \emph{CDC'14}.\hskip 1em plus 0.5em minus 0.4em\relax IEEE,
	2014, pp. 5033--5039.
	
\end{thebibliography}
\end{document}